\DeclareMathOperator{\sech}{sech}
\def\ba{\begin{align}}
\def\ea{\end{align}}
\def\({\left(}
\def\){\right)}
\def\[{\left[}
\def\]{\right]}
\def\lb{\left|}
\def\rb{\right|}
\def\N{\mathbb{N}}
\def\R{\mathbb{R}}
\def\Z{\mathbb{Z}}
\def\C{\mathbb{C}}
\def \E{\mathbf{E}}
\def \CC{{\mathbb C}}
\def \DD{{\Delta}}
\def \RR{{\mathbb R}}
\newcommand{\seg}[2]{\stackrel{\line(1,0){#1}}{#2}}
\providecommand{\keywords}[1]{\textbf{\textit{Keywords--}} #1}
\begin{document}

\title{{A note on invariance of the Cauchy and related distributions}}
\author{Wooyoung Chin, Paul Jung, and Greg Markowsky}
\maketitle

	\begin{abstract}
	{It is known that if $f$ is an analytic self map of the complex upper half-plane which also maps $\mathbb{R}\cup\{\infty\}$ to itself, and $f(i)=i$, then $f$ preserves the Cauchy distribution. This note concerns three results related to the above fact.
		%We then use this invariance to explain why one obtains the Cauchy distribution when trying to use Newton's method on the function $f(x)=x^2+1$. 
	}

\end{abstract}

\keywords{
		Cauchy distribution,  \and hyperbolic secant distribution,  \and Boole transformation, \and Newton's method}

\tableofcontents
%\section{Introduction}

%{\color{red} Wooyoung: I rearranged the draft, and removed the part that proves the invariance of the Cauchy distribution by elementary computations.}

%\section{Exit distributions}

%\tcb{Greg will show exit distribution of the strip is sech (remark: this is an eig fn of FT). Also, hitting distribution from $\infty$ of [0,1] is arcsin.
%	Also write a comment on surprising symmetries of sech and arcsin (just as remarks)}

%\section{Invariance}

\vspace{2mm}

In \cite{pitman1967cauchy}, it was noticed that if $X$ has a standard Cauchy distribution and the finite or infinite sum 
\begin{align}\label{PW eq}
f(z)=az-\frac{b}z+\sum_{n} \frac{b_n}{a_n-z}-\frac{a_nb_n}{a_n^2+1}
\end{align}
 is a meromorphic function with all real constants such that %\tcb{[check this]}
\begin{enumerate}
	\item $a_n\neq 0$,
	\item any nonzero  constants $a,b,b_n$ all have the same sign, and
	\item $\sum \frac{b_n}{a_n^2}<\infty$ with $|a_n|\to\infty$,
\end{enumerate}
 then $(a+b+\sum_n\frac{b_n}{a_n^2+1})^{-1}f(X)$ also has a standard Cauchy distribution. The proof was a direct calculation of the characteristic function of $f(X)$ using a contour integral. Later, \cite{letac1977functions} gave a full characterization of measurable functions mapping $\R$ to $\R$ which preserve the set of all Cauchy distributions. At the heart of these results is the following idea: if $f$ is an analytic self map of the upper half of the complex plane, which also maps the extended real line to itself, then $f$,  appropriately centered and scaled, should preserve the standard Cauchy distribution {due to the fact that the Cauchy density is the Poisson kernel in the upper half-plane. This connection can be realized via the conformal invariance of Brownian motion and the fact that the exit distribution of Brownian motion from the upper half-plane is Cauchy (this was pointed out in \cite{letac1977functions}).}

{This note is based on our investigation into these results, and contains three sections which are related, but relatively independent of each other. In the first, we use the optional stopping theorem to calculate exit distributions of planar Brownian motion. In the second, we show how a particular invariant map of the standard Cauchy distribution, known as the Boole transformation\footnote{The name of this transformation is taken from \cite{adler1973ergodic}.}, and the ergodic theorem can be combined to give a quick proof of an interesting fact addressed in \cite{davarREU} and pointed out to us by D. Khoshnevisan}: when one tries in vain to use Newton's method to find real roots of $x^2+1$, the empirical distribution of the iterates converges to a Cauchy distribution. In the third, we show how recent ideas from  \cite{gross} can be used together with conformal invariance in order to find invariant maps for more general distributions. We then use this technique to provide an example of a map which leaves the hyperbolic secant distribution invariant.

%  Perhaps our most important application of the aforementioned Cauchy invariance is given in Section \ref{sec:aw}. There we argue that the `super-universal' (this term, borrowed from \cite{bouchaud2018two}, will be explained in Section \ref{sec:aw}) general Cauchy law for the Stieltjes transform at some fixed $s\in\R$ for rescaled eigenvalue point processes of random operators is essentially due to the fact that the map $f(x)=\frac{1}{s-x}, s\in\R$ preserves the class of general Cauchy laws. This universal convergence was proved by Fyodorov and Williams for the GOE and GUE ensembles in \cite[Appendix B]{fyodorov2007replica}, and proved in much greater generality by Aizenman and Warzel in \cite{aizenman2015ubiquity}.

\section{Optional stopping and exit distributions of planar Brownian motion} \label{sec:OSTexit}

%In this section, we present another interesting proof of Lemma \ref{lem:newton_cauchy_invariant} and \ref{lem:newton_variant_cauchy_invariant}.
%We start with an interesting proof of the following fact.

%The following basic fact is well-known, and can be proved by a variety of methods, including using conformal invariance (see \cite{Markowsky18}), properties of stable distributions ...

In this section, we use the optional stopping theorem in order to calculate various exit distributions of planar Brownian motion via their characteristic or moment generating functions. The results in this section are all well-known and have been previously proved by other methods; however, our method produces {short and intuitive proofs, and we hope that the method may be found to be useful in other settings as well. To the best of our knowledge, this technique for proving our results is not present in the literature, even though it is a classical technique in the context of one dimensional Brownian motion.}

%As far as we are aware, the following short proof is new, and also demonstrates how the optional stopping theorem can be used to calculate an exit distribution. This technique is derived from the analogous trick for one-dimensional processes, where it can be used for instance to calculate the distribution of the hitting time of a point, as well in the proof of L\'evy's theorem.

\begin{proposition}[Cauchy law for Brownian exit of the half-plane]  \label{thm:bm_exit_cauchy}
	Let $(B_t)_{t \ge 0}$ be a complex-valued Brownian motion starting at $i$.
	If $T$ is the time when $(B_t)_{t \ge 0}$ exits the upper half plane, then the distribution of $B_T$ is the standard Cauchy distribution.
\end{proposition}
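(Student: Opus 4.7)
The plan is to compute the characteristic function of $B_T$ directly via the optional stopping theorem applied to a cleverly chosen exponential martingale, and recognize the result as the characteristic function $\xi \mapsto e^{-|\xi|}$ of the standard Cauchy law.

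\medskip

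First, write $B_t = X_t + iY_t$ with $X, Y$ independent real Brownian motions starting at $0$ and $1$ respectively, so that $T = \inf\{t \ge 0 : Y_t = 0\}$. Recurrence of one-dimensional Brownian motion gives $T < \infty$ almost surely, and on $[0,T]$ we have $Y_t \ge 0$. Fix $\xi > 0$ and consider
\[
M_t \defeq e^{i\xi B_t} = e^{i\xi X_t}\, e^{-\xi Y_t}.
\]
Since $z \mapsto e^{i\xi z}$ is entire, Itô's formula (applied to real and imaginary parts, both harmonic) shows that $M_t$ is a complex-valued local martingale. Moreover, $|M_{t\wedge T}| = e^{-\xi Y_{t \wedge T}} \le 1$, so the stopped process is a bounded (hence uniformly integrable) martingale.

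\medskip

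Next, apply optional stopping at time $T$. Since $B_T \in \R$ (the boundary of the upper half-plane), $M_T = e^{i\xi B_T}$, and dominated convergence together with the bound above gives
\[
\bbE\bigl[e^{i\xi B_T}\bigr] = \bbE[M_T] = M_0 = e^{i\xi \cdot i} = e^{-\xi}.
\]
To obtain the identity for negative arguments, take complex conjugates and use that $B_T$ is real: for $\xi > 0$,
\[
\bbE\bigl[e^{-i\xi B_T}\bigr] = \overline{\bbE\bigl[e^{i\xi B_T}\bigr]} = \overline{e^{-\xi}} = e^{-\xi}.
\]
Combining the two cases yields $\bbE\bigl[e^{i\xi B_T}\bigr] = e^{-|\xi|}$ for all $\xi \in \R$, which is exactly the characteristic function of the standard Cauchy distribution, and L\'evy's uniqueness theorem concludes the proof.

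\medskip

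The only delicate point is the justification of optional stopping: although $T$ is almost surely finite, it is not bounded, so one cannot invoke the elementary bounded-stopping-time version directly. This is handled by first stopping at $t \wedge T$, where the martingale $M_{t \wedge T}$ is uniformly bounded by $1$; the dominated convergence theorem then lets us pass to the limit $t \to \infty$, using $Y_T = 0$ on $\{T < \infty\}$. Everything else is a short computation, so I expect this verification to be the only mildly technical step.
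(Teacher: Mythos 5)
Your proof is correct and follows essentially the same route as the paper: apply optional stopping to the bounded martingale $e^{i\xi B_t}$ on the upper half-plane for $\xi \ge 0$, then extend to negative arguments. The only (minor) difference is that where the paper invokes ``symmetry'' to handle $\xi < 0$, you instead use the identity $\bbE[e^{-i\xi B_T}] = \overline{\bbE[e^{i\xi B_T}]}$ valid for any real-valued random variable, which is a slightly more elementary justification; you also spell out the uniform-integrability step in optional stopping in more detail.
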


\begin{proof}
	We use the optional stopping theorem to compute the characteristic function of $B_T$.
	Let $\theta \ge 0$ be given. Since $z \mapsto e^{i\theta z}$ is holomorphic, $M_t=(e^{i\theta B_t})_{t \ge 0}$ is a (complex-valued) martingale, and it is bounded on the upper half-plane since there $|M_t| = e^{-\theta Im(B_t)} \leq 1$. %and so is $(e^{i\theta B_{t \wedge T}})_{t \ge 0}$.
%	As $\Re [i \theta B_{t \wedge T}] \le 0$, we have $|e^{i\theta B_{t \wedge T}}| \le 1$, and so $(e^{i\theta B_{t \wedge T}})_{t \ge 0}$ is uniformly integrable.
	Thus, by the optional stopping theorem,
	$$
		\E [e^{i\theta B_T}] = \E [e^{i\theta B_0}] = \E [e^{i\theta i}] = e^{-\theta}.
	$$
This argument fails when $\theta < 0$, because in that case $M_t$ is unbounded, however symmetry allows us to conclude easily that $\E [e^{i\theta B_T}]=e^{-|\theta|}$ for all real $\theta$, which is the characteristic function of the standard Cauchy distribution.
%	Now assume that $\theta < 0$.
%	Let $(B'_t)_{t \ge 0}$ be a complex-valued Brownian motion starting at $-i$. If $T'$ is the time when $(B'_t)_{t \ge 0}$ exits the lower half plane, then $B_T$ and $B'_{T'}$ have the same distribution by symmetry.
%	By the same reasoning as above, $(e^{i\theta B'_{t \wedge T'}})_{t \ge 0}$ is a uniformly integrable martingale.
%	Thus we have
%	\begin{displaymath}
%		\E [e^{i\theta B_T}] = \E[e^{i\theta B'_{T'}}] = \E[e^{i\theta (-i)}] = e^{\theta}.
%	\end{displaymath}
%	This shows that the characteristic function of $B_T$ is $e^{-|\theta|}$, and therefore we can conclude that $B_T$ has the standard Cauchy law.
\end{proof}

%\noindent\tcb{ {\bf Remark:}} 
{	\begin{remark}\label{remark}
	One immediately obtains the result regarding \eqref{PW eq} in the introduction. Suppose $B_0=i$ and $T$ is the first time $(B_t)_t$ hits $\R$.
	L\'evy's theorem on the conformal invariance of Brownian motion says that if $f$ is analytic, then $(f(B_t))_t$ is a time-changed Brownian motion (see \cite{dur} or \cite{bass}).
	 If in addition, $f$ maps the complex upper half-plane to itself and maps $\mathbb{R}\cup\{\infty\}$ to itself, then $f(B_T)$ is precisely the place where the time-changed Brownian motion first hits $\R$. Therefore, if $f(B_0)= f(i)=i$, then  $f(B_T)$ has a standard Cauchy distribution.
 \end{remark}}

%\vspace{3mm}

\begin{proposition}[Sech law for Brownian exit of the strip] \label{thm:bm_exit_sech}
	Let $(B_t)_{t \ge 0}$ be a complex-valued Brownian motion starting at $0$.
	If $T$ is the time when $(B_t)_{t \ge 0}$ exits the infinite strip $\{-1 < Re(z) < 1\}$, then the distribution of $Im(B_T)$ has a hyperbolic secant law, characterized by density $\frac{\mbox{sech}(\frac{\pi}{2}y)}{2}dy$.
\end{proposition}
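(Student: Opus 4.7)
The plan is to mimic the strategy of Proposition \ref{thm:bm_exit_cauchy}: find an analytic function whose composition with $(B_t)$ is a bounded martingale on the strip, and whose boundary values encode the characteristic function of $Im(B_T)$. Note first that $T < \infty$ almost surely, since $Re(B_t)$ is a one-dimensional Brownian motion that exits $(-1,1)$ in finite time.

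For fixed $\mu \in \R$, I would take $f(z) = \cosh(\mu z)$. Since $f$ is entire, $M_t := \cosh(\mu B_t)$ is a complex-valued martingale by L\'evy's theorem (or It\^o's formula, using that complex Brownian motion has vanishing quadratic variation in the sense $(dB_t)^2 = 0$). The identity $\cosh(\mu(x+iy)) = \cosh(\mu x)\cos(\mu y) + i\sinh(\mu x)\sin(\mu y)$ gives $|\cosh(\mu z)|^2 = \cosh^2(\mu x) - \sin^2(\mu y) \le \cosh^2(\mu)$ on the strip $|Re(z)| \le 1$. Hence the stopped martingale $(M_{t \wedge T})$ is uniformly bounded and optional stopping yields $\E[\cosh(\mu B_T)] = \cosh(0) = 1$.

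Writing $B_T = \epsilon + iY$ with $\epsilon \in \{\pm 1\}$ and $Y := Im(B_T)$, this reads $\cosh(\mu)\,\E[\cos(\mu Y)] + i\sinh(\mu)\,\E[\epsilon\sin(\mu Y)] = 1$. Reflection of $(B_t)$ across the real axis fixes the strip and the starting point $0$, so $(\epsilon, Y) \ed (\epsilon, -Y)$; in particular $\E[\epsilon\sin(\mu Y)] = 0$ and $Y \ed -Y$, so $\E[\sin(\mu Y)] = 0$. Taking real parts gives $\E[\cos(\mu Y)] = \sech(\mu)$, and combining these yields $\E[e^{i\mu Y}] = \sech(\mu)$ for every real $\mu$. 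A standard Fourier inversion identifies $\sech(\mu)$ as the characteristic function of the density $\tfrac{1}{2}\sech(\tfrac{\pi}{2}y)$, completing the proof.

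The main obstacle is choosing the right test function. Natural analogues of the Cauchy proof, such as $e^{i\mu z}$ or $\cos(\mu z)$, fail because each grows exponentially in $Im(z)$ and so is unbounded on the strip. The trick is to take $f(z) = \cosh(\mu z) = \cos(i\mu z)$, which reverses the roles: it grows exponentially in $Re(z)$ (and $Re(z)$ is bounded in the strip) while oscillating in $Im(z)$. The trigonometric--hyperbolic mixing of $\cosh$ at the vertical boundaries is exactly what turns the strip geometry into the hyperbolic secant law.
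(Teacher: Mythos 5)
Your proof is correct and follows essentially the same route as the paper: the paper applies optional stopping to the bounded martingale $e^{\theta B_t}$ and then factors $\E[e^{\theta Re(B_T)}e^{i\theta Im(B_T)}]$ using the independence of $Re(B_T)$ and $Im(B_T)$ together with the symmetry $\E[e^{\theta Re(B_T)}]=\cosh\theta$, whereas you work directly with the symmetrized test function $\cosh(\mu z)=\tfrac{1}{2}(e^{\mu z}+e^{-\mu z})$ and replace the independence step with a reflection-across-$\RR$ argument. Both proofs turn on the same key observation that the analytic test function should grow in $Re(z)$ (bounded on the strip) while oscillating in $Im(z)$; your variant is marginally more self-contained, since it sidesteps the need to justify the independence of $Re(B_T)$ and $Im(B_T)$.
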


\begin{proof}
	We again use the optional stopping theorem to compute the characteristic function of $B_T$. Let $\theta \ge 0$ be given. Since $z \mapsto e^{\theta z}$ is holomorphic, $M_t=(e^{\theta B_t})_{t \ge 0}$ is a martingale, and it is bounded on $\{-1 < Re(z) < 1\}$ since $|M_t| = e^{\theta Re(B_t)} \leq e^{|\theta|}$. %and so is $(e^{i\theta B_{t \wedge T}})_{t \ge 0}$.
%	As $\Re [i \theta B_{t \wedge T}] \le 0$, we have $|e^{i\theta B_{t \wedge T}}| \le 1$, and so $(e^{i\theta B_{t \wedge T}})_{t \ge 0}$ is uniformly integrable.
	Thus, by the optional stopping theorem,
	$$
		1=\E [e^{\theta B_0}] = \E [e^{\theta B_T}] = \E [e^{\theta Re(B_T)} e^{i\theta Im (B_T)}].
	$$

By symmetry, $\E [e^{\theta Re(B_T)}] = \frac{e^\theta + e^{-\theta}}{2} = \cosh \theta$, and furthermore the random variables $Re(B_T)$ and $Im(B_T)$ are independent so that the final expectation factors. From this we obtain $\E [e^{i\theta Im (B_T)}] = \mbox{sech} (\theta)$, and inverting the Fourier transform gives the result.

\end{proof}

We note that both of the preceding results can be obtained using the conformal invariance of Brownian motion and harmonic measure (see \cite{Markowsky18}), and Theorem \ref{thm:bm_exit_cauchy} can also be deduced by a direct calculation, using the Poisson kernel, or by properties of stable distributions; see for instance \cite[Sec. 1.9]{dur} or \cite[Ch. VI.2]{feller2008introduction}. To conclude this section, we give a new proof of a result from \cite{bour}, which was proved there by a different argument which also involved planar Brownian motion.

\begin{proposition} \label{}
If $C$ is a standard Cauchy random variable, then $$\E[e^{i \lambda \frac{2}{\pi} \ln |C|}] = \mbox{sech} \lambda.$$
\end{proposition}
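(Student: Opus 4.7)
The plan is to combine Propositions \ref{thm:bm_exit_cauchy} and \ref{thm:bm_exit_sech} through conformal invariance, via a map from the upper half-plane to the strip $\{-1 < Re(w) < 1\}$ that sends $i$ to $0$ and pulls back the imaginary part to $\frac{2}{\pi}\ln|x|$ on $\R$. The natural candidate is $h(z) := 1 + \frac{2i}{\pi}\log z$, where $\log$ denotes the principal branch. One checks directly that $h(i) = 1 + \frac{2i}{\pi}\cdot\frac{i\pi}{2} = 0$, and that if $\log z = u + iv$ with $v \in (0, \pi)$, then $Re(h(z)) = 1 - \frac{2v}{\pi} \in (-1, 1)$, so $h$ is a conformal bijection from the upper half-plane onto the target strip.

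Next I would compute $h$ on the boundary. For $x > 0$, $h(x) = 1 + \frac{2i}{\pi}\ln x$ lies on the right wall $\{Re = 1\}$, while for $x < 0$, the principal branch gives $\log x = \ln|x| + i\pi$, so $h(x) = -1 + \frac{2i}{\pi}\ln|x|$ lies on the left wall $\{Re = -1\}$. In both cases, $Im(h(x)) = \frac{2}{\pi}\ln|x|$.

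Now let $(B_t)_{t \ge 0}$ be a complex-valued Brownian motion starting at $i$ and $T$ its exit time from the upper half-plane, so by Proposition \ref{thm:bm_exit_cauchy}, $B_T \stackrel{d}{=} C$. By L\'evy's conformal invariance theorem (exactly as invoked in Remark \ref{remark}), $(h(B_t))_t$ is a time-changed complex Brownian motion starting at $0$ whose first exit from the strip $\{-1 < Re(w) < 1\}$ is $h(B_T)$; by the previous step this exit point has imaginary part $\frac{2}{\pi}\ln|C|$. Applying Proposition \ref{thm:bm_exit_sech} to this time-changed motion then yields
\[
\E\bigl[e^{i\lambda\cdot\frac{2}{\pi}\ln|C|}\bigr] = \E\bigl[e^{i\lambda\, Im(h(B_T))}\bigr] = \sech\lambda.
\]

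The only slight subtlety is the branch/corner behavior of $h$ at $z = 0$ and at $\infty$, but complex Brownian motion avoids any prescribed point almost surely, so these exceptional boundary points play no role in the exit distribution and the identification of $Im(h(B_T))$ with the strip-exit imaginary part goes through cleanly.
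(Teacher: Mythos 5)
Your proof is correct, but it takes a genuinely different route from the paper's. The paper gives a direct, self-contained optional-stopping argument in the same idiom as Propositions \ref{thm:bm_exit_cauchy} and \ref{thm:bm_exit_sech}: it observes that $z \mapsto e^{i\lambda\frac{2}{\pi}\mathrm{Log}(z)}$ is a bounded holomorphic function on the right half-plane $\{Re(z)>0\}$, runs a Brownian motion from $1$ until it hits the imaginary axis at time $T$, stops the resulting bounded martingale to get $1 = \E\bigl[e^{i\lambda\frac{2}{\pi}\ln|B_T|}\bigr]\,\E\bigl[e^{-\lambda\frac{2}{\pi}\mathrm{Arg}(B_T)}\bigr]$ (using the symmetry-forced independence of $\ln|B_T|$ and $\mathrm{Arg}(B_T)$), identifies $\mathrm{Arg}(B_T)$ as uniform on $\{\pm\pi/2\}$ so that the second factor is $\cosh\lambda$, and divides. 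Your argument instead synthesizes Propositions \ref{thm:bm_exit_cauchy} and \ref{thm:bm_exit_sech} through L\'evy's conformal invariance: the explicit biholomorphism $h(z) = 1 + \frac{2i}{\pi}\log z$ from the upper half-plane to the strip with $h(i)=0$, together with the boundary identity $Im(h(x)) = \frac{2}{\pi}\ln|x|$ on both rays, transports the Cauchy exit law directly into the $\sech$ exit law. Both proofs exploit the same logarithmic geometry, but the paper's version does not logically depend on Proposition \ref{thm:bm_exit_sech}, whereas yours makes the structural relationship among the section's three results completely transparent and avoids having to separately identify the $\mathrm{Arg}$ factor. Your closing remark that planar Brownian motion almost surely avoids the exceptional boundary points $0$ and $\infty$ is exactly what is needed to justify the boundary identification.
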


\begin{proof}

The function $z \mapsto e^{i\lambda \frac{2}{\pi} \mbox{Log}(z)}$ is holomorphic and bounded on $\{Re(z) > 0\}$, where $\mbox{Log}(z) = \ln |z| + i Arg(z)$ and $Arg$ is the principal branch of the argument function, taking values in $(-\pi, \pi)$ on $\{Re(z) > 0\}$. Thus, if $(B_t)_{t \ge 0}$ is a complex-valued Brownian motion starting at $1$ and $T$ is the time at which $B_t$ exits $\{Re(z) > 0\}$, then $e^{i\lambda \frac{2}{\pi} \mbox{Log}(B_t)}$ is a bounded martingale for $0 \leq t \leq T$ and the optional stopping theorem gives
$$
1 = \E[e^{i\lambda \frac{2}{\pi} \mbox{Log}(B_0)}]=\E[e^{i\lambda \frac{2}{\pi} \mbox{Log}(B_T)}] =\E[e^{i\lambda \frac{2}{\pi} (\ln|B_T|+ i Arg(B_T))}].
$$

Now, by symmetry, the random variables $\ln|B_T|$ and $Arg(B_T)$ are independent, and $B_T$ is standard Cauchy (Theorem \ref{thm:bm_exit_cauchy}), while $Arg(B_T)$ is uniform on $\{-\frac{\pi}{2}, \frac{\pi}{2}\}$, so we obtain
$$
1= E[e^{i\lambda \frac{2}{\pi} \ln|C|}]\Big(\frac{e^\lambda + e^{-\lambda}}{2}\Big),
$$
and the result follows.

\end{proof}

\section{Trying to find an imaginary root using Newton's method}\label{sec:newton}

In this subsection we show that when one tries to use Newton's method to find real roots of $x^2+1$, one will asymptotically end up with a Cauchy distribution.
Recall that Newton's method finds real roots of a differentiable $f$ finds by using a sequence of approximations which are trying to successively get closer to a root.
Starting with an arbitrary initial guess $x_0$, Newton's method computes the next approximation using
$$
	x_{n+1} = x - \frac{f(x_n)}{f'(x_n)},
$$
while hoping that we would never have $f'(x_n) = 0$.

%There are variants of Newton's method which use more complex formulas than the original Newton's method.
%The methods we introduce here are taken from \cite{Jayakumar13}.
%Although we introduce many variants, all of them will be reduced to a single problem.
%As in Newton's method, $x_n$ will denote the $n$th approximation, where $x_0$ is an arbitrary initial guess.
%In all variants we consider, we let
%$$
%	y_n \coloneqq  x_n - \frac{f(x_n)}{f'(x_n)}.
%$$
%The \emph{arithmetic mean Newton's method} computes $x_{n+1}$ according to
%$$
%	x_{n+1} = x_n - \frac{2f(x_n)}{f'(x_n) + f'(y_n)}.
%$$
%The \emph{midpoint Newton's method} computes $x_{n+1}$ according to
%$$
%	x_{n+1} = x_n - \frac{f(x_n)}{f'\left(\frac{x_n+y_n}{2}\right)}.
%$$
%The \emph{trapezoidal-Newton's method} computes $x_{n+1}$ according to
%$$
%	x_{n+1} = x_n - \frac{4f(x_n)}{f'(x_n) + 2f'\left(\frac{x_n+y_n}{2}\right) + f'(y_n)}.
%$$
%Finally, the \emph{Simpson-Newton's method} computes $x_{n+1}$ according to
%$$
%	x_{n+1} = x_n - \frac{6f(x_n)}{f'(x_n) + 4f'\left(\frac{x_n+y_n}{2}\right) + f'(y_n)}.
%$$

%We will consider applying Newton's method to the function $f(x) = x^2 + 1$, which has no real zeros.
For $x^2+1$, Newton's method computes the approximations using
$$
	x_{n+1} = \varphi(x_n),
$$
where $\varphi: \R \setminus \set{0} \to \R$ is given by the Boole transformation
$$
	\varphi(x) = x - \frac{x^2+1}{(x^2+1)'} = \frac{1}{2}\left(x - \frac{1}{x}\right).
$$
If we let {$$N \coloneqq \set{x \in \R}{\text{$\varphi^n(x)=\infty$ for some $n \in \N$}},$$} then $N$ is countable, and $\varphi$ maps the extended real line into itself.
Restrict the domain and codomain of $\varphi$ to $\R \setminus N$.
%Our first result says that $x_0, x_1, x_2, \ldots$ are asymptotically distributed according to the standard Cauchy law.

\begin{proposition}[Newton's method applied to $x^2+1$] \label{thm:newton_cauchy}
	For Lebesgue-almost every $x$ and for any measurable function $f: \R \to \R$ such that $f(x)/(1+x^2)$ is integrable,
	we have
	$$
		\frac{1}{n}\sum_{k=0}^{n-1} f(\varphi^{k}(x)) \to \int \frac{f(t)}{\pi(1 + t^2)} \,dt.
	$$
	In particular, for Lebesgue-almost every $x$, we have
	$$
		\frac{1}{n} \sum_{k=0}^{n-1} \delta_{\varphi^k(x)} \wto \frac{1}{\pi(1+t^2)}dt.
	$$
\end{proposition}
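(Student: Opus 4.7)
The strategy is to apply Birkhoff's ergodic theorem to the dynamical system $(\R \setminus N, \varphi, \mu)$, where $\mu$ is the standard Cauchy distribution. The two ingredients needed are (i) invariance of $\mu$ under $\varphi$, and (ii) ergodicity of $\varphi$ with respect to $\mu$. Once both are in hand, the pointwise statement follows from Birkhoff applied to $g(t) = f(t)/(\pi(1+t^2))$ composed with the obvious identification, while the weak-convergence statement follows by restricting attention to continuous bounded test functions $f$.

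For invariance, I would invoke the Remark following Proposition \ref{thm:bm_exit_cauchy}. One checks that $\varphi(z) = \tfrac{1}{2}(z - 1/z)$ is a meromorphic self-map of the upper half-plane: if $z = x+iy$ with $y>0$, then
\[
\operatorname{Im}\varphi(z) \;=\; \tfrac{1}{2}\left(y + \frac{y}{x^2+y^2}\right) \;>\; 0,
\]
and that $\varphi$ maps $\R \cup \{\infty\}$ to itself (with the pole at $0$ sent to $\infty$, which is why the orbit of Lebesgue-null set $N$ must be excised). Moreover $\varphi(i) = \tfrac{1}{2}(i - 1/i) = i$. By the Remark, if $X$ is standard Cauchy then $\varphi(X)$ is standard Cauchy, so $\mu$ is $\varphi$-invariant.

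For ergodicity, this is the classical theorem of Adler and Weiss \cite{adler1973ergodic}, which asserts precisely that the Boole transformation $\varphi(x) = \tfrac{1}{2}(x - 1/x)$ is ergodic with respect to the Cauchy measure. This is the step I expect to be the main obstacle conceptually: it is nontrivial and not a consequence of the analytic machinery developed in the rest of the paper, so I would just quote it.

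With invariance and ergodicity established, Birkhoff's theorem gives, for every $g \in L^1(\mu)$, $\frac{1}{n}\sum_{k=0}^{n-1} g(\varphi^k(x)) \to \int g\,d\mu$ for $\mu$-a.e.\ $x$. The integrability hypothesis $f(t)/(1+t^2) \in L^1(dt)$ is exactly the condition $f \in L^1(\mu)$, and since the Cauchy and Lebesgue measures on $\R$ have the same null sets, $\mu$-a.e.\ and Lebesgue-a.e.\ are interchangeable, yielding the first displayed limit. For the second, apply the first to every $f$ in a countable dense (in the uniform norm on compacta) subset of $C_c(\R)$ simultaneously outside a single Lebesgue-null set; together with tightness of the empirical measures (which follows from applying the first statement to, e.g., $f(t) = \mathbf{1}_{|t|>R}$ and letting $R \to \infty$), this gives weak convergence of $\frac{1}{n}\sum_{k=0}^{n-1}\delta_{\varphi^k(x)}$ to the Cauchy distribution.
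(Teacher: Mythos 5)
Your proposal is correct and follows the same overall strategy as the paper: establish $\varphi$-invariance of the Cauchy measure via the half-plane argument and Remark 2, establish ergodicity, and then invoke Birkhoff. The one genuine divergence is in how ergodicity is handled: you propose to simply quote Adler--Weiss, whereas the paper gives a self-contained proof (Lemmas \ref{lem:minus_double_ergodic} and \ref{lem:newton_ergodic}) by conjugating $\varphi$ through the Cayley map $F(x) = \frac{i-x}{i+x}$ to the map $z \mapsto -z^2$ on $S^1$ and proving ergodicity of that circle map by a Fourier-series argument. Citing the literature is legitimate and shorter; the paper's conjugation proof is more illuminating in the present context because it reuses exactly the same conformal-equivalence-with-the-disc picture underlying the rest of the note, and because the conjugation identity $F \circ \varphi \circ G(z) = -z^2$ is itself a nice piece of structure worth recording. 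On the other hand, your write-up is actually more careful than the paper on the passage from the first displayed limit to the weak convergence: you correctly note that the Birkhoff null set depends on $f$, so one must run the argument over a countable convergence-determining family and (if needed) add tightness via $f = \mathbf{1}_{\{|t|>R\}}$, which the integrability hypothesis permits. The paper leaves this step implicit. Your computation of $\operatorname{Im}\varphi(z)>0$ and the identifications $f\in L^1(\mu)\iff f/(1+x^2)\in L^1(dx)$ and ``$\mu$-a.e.\ $=$ Lebesgue-a.e.'' are all correct.
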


%Any variant of Newton's law we mentioned, applied to $x^2+1$, computes the approximations using
\noindent {\bf Remark.} Newton's method has several variants such as a trapezoidal-Newton's method or Simpson-Newton's method, to which the above proposition also applies. One simply replaces $\varphi$ by
$$
	\psi(x) \coloneqq \frac{x^3 - 3x}{3x^2 - 1},\qquad x\in\R \setminus \set{1/\sqrt{3}, -1/\sqrt{3}} .
$$
All the arguments in the proof of the proposition can then be extended to the map $\psi$.

In order to prove the proposition, we make use of Birkhoff's ergodic theorem and the ergodicity of $\varphi$. The ergodicity of $\varphi$ is well-known, but for completeness we include an argument here. We first consider a related map on $S^1$, namely $z \mapsto -z^2$, and show that this map on $S^1$ is ergodic.

\begin{lemma}[Ergodicity of the doubling map] \label{lem:minus_double_ergodic}
	Assume that $S^1$ is equipped with the usual measure, whose total measure is $2\pi$. The map $\rho:S^1 \to S^1$ given by $\rho(z) \coloneqq -z^2$ is ergodic.
\end{lemma}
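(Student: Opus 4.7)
The plan is to prove the ergodicity of $\rho$ by a Fourier-analytic argument. Parametrize $S^1$ by $z = e^{i\theta}$ with $\theta \in [0, 2\pi)$, so that $\rho$ acts on angles by $\theta \mapsto 2\theta + \pi \pmod{2\pi}$. Both doubling and the rotation by $\pi$ preserve the uniform measure on $S^1$, so $\rho$ does as well; ergodicity therefore reduces to showing that any $f \in L^2(S^1)$ satisfying $f \circ \rho = f$ a.e.\ is constant a.e.

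First I would expand such an invariant $f$ in its Fourier series $f(e^{i\theta}) = \sum_{n \in \Z} c_n e^{in\theta}$ and compute
\[
	f(\rho(e^{i\theta})) = \sum_{n \in \Z} c_n e^{in(2\theta + \pi)} = \sum_{n \in \Z} (-1)^n c_n \, e^{2in\theta}.
\]
Matching coefficients with $f$ term-by-term gives the two relations $c_m = 0$ for every odd $m$, and $c_{2n} = (-1)^n c_n$ for every $n \in \Z$.

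Next I would propagate these constraints to every nonzero frequency. Any $m \neq 0$ admits a unique decomposition $m = 2^k m'$ with $m'$ odd; applying the doubling relation $k$ times expresses $c_m$ as $\pm c_{m'}$, and the odd-frequency relation forces $c_{m'} = 0$. Hence $c_m = 0$ for all $m \neq 0$, so $f$ equals the constant $c_0$ almost everywhere. This establishes ergodicity.

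I don't expect a genuine obstacle here; the only care needed is in tracking signs in the iterated doubling relation. An alternative route would be to conjugate $\rho$ via $z \mapsto -z$ to the standard doubling map $z \mapsto z^2$ on $S^1$ (a short check shows this conjugacy works and preserves uniform measure) and quote the classical ergodicity of doubling, but the direct Fourier computation above is self-contained and equally brief.
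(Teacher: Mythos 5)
Your Fourier-analytic argument is essentially the paper's proof: both expand an invariant $L^2$ object (the paper uses the indicator $1_A$ of an invariant set, you use a general invariant $f$) and extract the relation $c_{2n}=(-1)^n c_n$. The only difference is the closing step: the paper concludes from $|c_n|=|c_{2n}|=|c_{4n}|=\cdots$ together with the decay of Fourier coefficients, while you first observe that $\rho$ produces only even frequencies so all odd coefficients vanish, then reduce every nonzero index $m=2^k m'$ to its odd part; both are correct and equally short.
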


\begin{proof}
	Assume that $A \subset S^1$ is a measurable set with $1_{\rho^{-1}(A)} = 1_A$ a.s.
	Let $1_A(z) = \sum_{n \in \Z} c_n z^n$ where the infinite summation is in the $L^2$ sense.
	It is not difficult to see that $$1_{\rho^{-1}(A)} = 1_A (-z^2) = \sum_{n \in \Z} c_n (-z^2)^n = \sum_{n \in \Z} (-1)^nc_n z^{2n},$$ again in the $L^2$ sense.
	Since $1_{\rho^{-1}(A)} = 1_A$ a.s., the coefficients of the two series must match, and so $c_{2n} = (-1)^nc_n$ for each $n \in \Z$.
	Since $|c_n|=|c_{2n}|=|c_{4n}|=\cdots$ and $\lim_{n \to \infty} c_n = \lim_{n \to -\infty} c_n = 0$, we have $c_n = 0$ for all $n \ne 0$.
	Thus $1_A$ must be constant a.s., and this shows that $\rho$ is ergodic.
\end{proof}

%\begin{lemma} \label{lem:triple_ergodic}
%	If $S^1$ is equipped with the usual measure, then the map $\tau:S^1 \to S^1$ given by $\tau(z) \coloneqq z^3$ is ergodic.
%\end{lemma}
%
%\begin{proof}
%	The proof is very similar to that of Lemma \ref{lem:minus_double_ergodic}.
%\end{proof}

Using Lemma \ref{lem:minus_double_ergodic}, we now prove that $\varphi$ is ergodic.
Let $\C_+$ be the upper half plane, and $\mathbb{D}$ the unit open disk.
Let $F:\C_+ \cup \R \to \overline{\mathbb{D}} \setminus \set{-1}$ and $G: \overline{\mathbb{D}} \setminus \set{-1} \to \C_+ \cup \R$ be defined by
$$
	F(x) \coloneqq \frac{i-x}{i+x}
$$
and
$$
	G(x) \coloneqq i\frac{1-z}{1+z}.
$$
Then $F$ and $G$ are inverses to each other.

\begin{lemma}[Ergodicity of the Boole transformation] \label{lem:newton_ergodic}
	With respect to the standard Cauchy distribution, the Boole transformation,
	$$
		\varphi(x) = \frac{1}{2}\left(x - \frac{1}{x}\right),\qquad x\in \R \setminus N,
	$$
	is ergodic.
\end{lemma}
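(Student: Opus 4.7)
The plan is to reduce ergodicity of $\varphi$ on $(\R\setminus N,\,\text{Cauchy})$ to the already-proved ergodicity of the doubling map $\rho(z)=-z^2$ on $(S^1,\,\text{uniform})$ from Lemma \ref{lem:minus_double_ergodic}, via the Cayley transform $F$. So the proof comes in two computations followed by a standard transport-of-ergodicity argument.

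First, I would verify the conjugacy $F \circ \varphi = \rho \circ F$ on $\R\setminus N$ by direct calculation. Given $z = F(x) \in S^1$, one computes
$$
\varphi(G(z)) = \tfrac{1}{2}\!\left(i\tfrac{1-z}{1+z} + i\tfrac{1+z}{1-z}\right) = \tfrac{i(1+z^2)}{1-z^2},
$$
and then
$$
F(\varphi(G(z))) = \frac{i - \tfrac{i(1+z^2)}{1-z^2}}{i + \tfrac{i(1+z^2)}{1-z^2}} = -z^2 = \rho(z).
$$
So $\varphi$ is conjugate through $F$ to $\rho$ restricted to $S^1\setminus F(N\cup\{-1\})$, and this exceptional set is countable and hence of measure zero.

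Second, I would check that $F$ pushes the standard Cauchy measure on $\R$ forward to normalized Lebesgue measure on $S^1$. Writing $z = e^{i\theta}$ with $\theta \in (-\pi,\pi)$, one sees that $G(e^{i\theta}) = \tan(\theta/2)$, so the change of variable $x = \tan(\theta/2)$ gives $1+x^2 = \sec^2(\theta/2)$ and $dx = \tfrac{1}{2}\sec^2(\theta/2)\,d\theta$. Therefore
$$
\frac{dx}{\pi(1+x^2)} = \frac{d\theta}{2\pi},
$$
which is uniform measure on $S^1$.

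Finally, suppose $A \subset \R\setminus N$ is $\varphi$-invariant, i.e.\ $\mathbf{1}_{\varphi^{-1}(A)} = \mathbf{1}_A$ Cauchy-a.s. Then $F(A) \subset S^1$ is $\rho$-invariant (mod null sets) by the conjugacy, and has the same measure under the uniform distribution as $A$ has under the Cauchy distribution by the pushforward computation. By Lemma \ref{lem:minus_double_ergodic}, $F(A)$ has uniform measure $0$ or $1$, hence $A$ has Cauchy measure $0$ or $1$. This yields the ergodicity of $\varphi$. The only potential snag is the bookkeeping around the countable exceptional set $N\cup F^{-1}(\{-1\})$, but since this set is Cauchy-null and forward invariant in a trivial sense, it does not affect the argument.
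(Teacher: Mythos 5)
Your proof is correct and follows essentially the same route as the paper: conjugate $\varphi$ to $\rho(z)=-z^2$ via the Cayley map $F$, then transport invariant sets across the conjugacy and invoke Lemma~\ref{lem:minus_double_ergodic}. The one place you go beyond the paper is in explicitly computing the pushforward of the Cauchy measure under $F$ to be normalized uniform measure on $S^1$; the paper only records the weaker (and sufficient) fact that $F$ and $G$ carry null sets to null sets, so your version makes the equality of measures of $A$ and $F(A)$ explicit rather than just the null/co-null dichotomy.
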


\begin{proof}
	Let $D \coloneqq F(\R \setminus N)$. Restrict the domains and codomains of $F$ and $G$ by $F:\R \setminus N \to D$ and $G:D \to \R \setminus N$.
	For each $z \in D$, we have
	\begin{align*}
		(F \circ \varphi \circ G)(z) &= F\left( \frac{1}{2} \left( i\frac{1-z}{1+z} + i\frac{1+z}{1-z}  \right) \right)
		= F\left( \frac{i}{2} \cdot \frac{(1-z)^2 + (1+z)^2}{1-z^2} \right) \\
		&= F \left( i \frac{1+z^2}{1-z^2} \right)
		= \frac{1 - \frac{1+z^2}{1-z^2}}{1 + \frac{1+z^2}{1-z^2}}
		= \frac{-2z^2}{2} = -z^2.
	\end{align*}
		
	Assume that $A \subset \R \setminus N$ is a measurable set with $1_{\varphi^{-1}(A)} = 1_A$ a.s.
	Since $F$ and $G$ map measure-zero sets to measure-zero sets, we have
	%$$
		$$1_{F(A)} = 1_{F(\varphi^{-1}(A))} = 1_{(F \circ \varphi \circ G)^{-1}(F(A))} \quad \text{a.s.}$$
	%$$
	By Lemma \ref{lem:minus_double_ergodic}, the measure of $F(A)$ should be either $0$ or $1$, and therefore the measure of $A$ is either $0$ or $1$. This shows that $\varphi$ is ergodic.
\end{proof}

%\begin{lemma} \label{lem:newton_variant_ergodic}
%	Let $\R \setminus M$ be equipped with the standard Cauchy law.
%	Then the map $\psi:\R \setminus M \to \R \setminus M$ is ergodic.
%	Recall that $\psi$ is given by
%	$$
%		\psi(x) = \frac{x^3-3x}{3x^2-1}.
%	$$
%\end{lemma}
%
%\begin{proof}
%	Let $E \coloneqq F(\R \setminus M)$. In this proof only, restrict the domains and codomains of $F$ and $G$ as $F:\R \setminus M \to E$ and $E \to \R \setminus M$.
%	For each $z \in E$, we have
%	\begin{align*}
%		(\psi \circ G) (z) &= \psi \left( i\frac{1-z}{1+z} \right) = i \frac{\frac{(1-z)^3}{(1+z)^3}+3\frac{1-z}{1+z}}{3\frac{(1-z)^2}{(1+z)^2} + 1} \\
%		&= i \frac{(1-z)^3 + 3(1-z)(1+z)^2}{3(1-z)^2(1+z) + (1+z)^3} \\
%		&= i \frac{4-4z^3}{4+4z^3} = i\frac{1-z^3}{1+z^3},
%	\end{align*}
%	and so
%	$$
%		(F \circ \psi \circ G)(z) = F\left( i\frac{1-z^3}{1+z^3} \right)
%		= \frac{1 - \frac{1-z^3}{1+z^3}}{1 + \frac{1-z^3}{1+z^3}} = z^3.
%	$$
%	Using Lemma \ref{lem:triple_ergodic}, an argument similar to that in the proof of Lemma \ref{lem:newton_ergodic} shows that $\psi$ is ergodic.
%\end{proof}

\begin{proof}[Proof of Proposition \ref{thm:newton_cauchy}]
	Note that $\varphi(i) = i$. Also, if $z \in \C_+$, then $1/z \in -\C_+$, and thus $-1/z \in \C_+$.
	Therefore,
	%$$
	$$	\varphi(z) = \frac{1}{2}\left(z - \frac{1}{z}\right) \in \C_+$$
	and $\varphi$  maps $\C_+$ into itself.
	{Thus, 
	%by Theorem \ref{thm:cauchy_invariant}, with $L = \set{0}$, 
	by \eqref{PW eq} (see also Remark \ref{remark})} we have that $\varphi$ preserves the standard Cauchy distribution.
	
	%Equip $\R \setminus N$ with the standard Cauchy distribution
	%$$
%		\frac{1}{\pi(1+t^2)}\,dt.
%	$$
	With respect to the standard Cauchy distribution, the map $\varphi$ is measure-preserving and ergodic, thus the desired conclusion follows from the ergodic theorem (see for instance, \cite[Theorem 7.2.1]{DurrettPTE}).
\end{proof}
%The proof of Theorem \ref{thm:newton_variant_cauchy} is similar.

\section{Invariance of the hyperbolic secant distribution}\label{sec:sech}

Invariant maps of the Cauchy distribution can also be used to find functions that preserve distributions other than the Cauchy, for instance \cite{PitmanYor04} applies this theorem to find a family of rational maps under which the arc-sine law is invariant. We can adapt this technique to more general distributions, as follows. Suppose that $W$ is a simply connected domain in $\CC$ which is symmetric about $\RR$; that is, $z \in W$ if, and only if, $\bar z \in W$. Suppose further that the boundary components of $W$ in the upper and lower half-planes can each be viewed as the graph of a continuous function with dependent variable $y$ and independent variable $x$; equivalently, any vertical line $z(t)=x+it$ intersects $\partial W$ at exactly two points, which are necessarily conjugates of each other. Let 
\begin{align}\label{tau}
\tau = \inf\{t \geq 0: B_t \in \partial W\},\end{align} 
and let $\DD_a$, for $a \in \RR$, denote the distribution of $Re(B_\tau)$ under the condition $B_0 = a$ a.s. It was shown in a recent elegant paper \cite{gross}, that any distribution satisfying certain moment conditions can be realized by a simply connected domain in this manner. For $x \in \RR$, let $\pi(x)$ be the unique point $z$ in the upper half-plane with $z \in \partial W$ and $Re(z) = x$. We have the following simple proposition.

\begin{proposition} \label{zoe}
If $X \sim \DD_a$ for some $a \in \RR$ and $f$ is a conformal automorphism of $W$ {which maps $\RR$ into itself}, then $Re(f(\pi(X))) \sim \DD_{f(a)}$.
\end{proposition}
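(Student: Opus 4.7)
The plan is to mirror the argument of Remark \ref{remark}, this time using Brownian motion inside $W$ in place of the upper half-plane. The two ingredients will be Lévy's theorem on the conformal invariance of planar Brownian motion, together with a Schwarz-reflection argument identifying $\pi(X)$ with the Brownian exit point from $W$ conditioned on exiting through the upper boundary arc.

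First, I will let $(B_t)_{t \ge 0}$ be a planar Brownian motion started at $a$ and $\tau$ its exit time from $W$. By definition $X \defeq Re(B_\tau) \sim \DD_a$. Since $B_0 = a \in \RR$ and $W$ is symmetric about $\RR$, the reflected process $(\overline{B_t})_t$ has the same law as $(B_t)_t$, so $\P(Im(B_\tau) > 0) = 1/2$ and the conditional law of $Re(B_\tau)$ given $Im(B_\tau) > 0$ is again $\DD_a$. On that event $B_\tau = \pi(Re(B_\tau))$, so $\pi(X)$ is distributed as $B_\tau$ conditioned on $\{Im(B_\tau) > 0\}$.

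Next, since $f$ is holomorphic, Lévy's theorem shows that $(f(B_t))_{0 \le t \le \tau}$ is a time change of a planar Brownian motion started at $f(a) \in \RR$; and since $f$ is a conformal automorphism of $W$, this time-changed Brownian motion exits $W$ exactly at $f(B_\tau)$, which therefore has the same law as the exit point of Brownian motion from $W$ starting at $f(a)$. In particular $Re(f(B_\tau)) \sim \DD_{f(a)}$. Applying the identity theorem to the holomorphic function $z \mapsto \overline{f(\bar z)}$, which agrees with $f$ on $W \cap \RR$, yields $f(\bar z) = \overline{f(z)}$ throughout $W$; hence $f$ either preserves the two components of $W \setminus \RR$ or swaps them. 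Either way $Im(f(B_\tau))$ has a definite sign on the event $\{Im(B_\tau) > 0\}$, and the same reflection symmetry, now applied to the Brownian motion from $f(a)$, shows that the conditional law of $Re(f(B_\tau))$ given any fixed sign of $Im(f(B_\tau))$ is again $\DD_{f(a)}$. Combining this with the conclusion of the previous paragraph gives $Re(f(\pi(X))) \sim \DD_{f(a)}$.

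The main technical point I expect to have to address is the boundary behaviour of $f$: we need $f$ to extend continuously to $\partial W$ so that $f(B_\tau)$ and $f(\pi(X))$ are well-defined, and we need to know how this extension acts on the upper-boundary arc. This should follow from Carathéodory's theorem, since the hypotheses on $\partial W$ (graphs of continuous functions above and below $\RR$) make $W$ a Jordan domain after a suitable compactification if $W$ is unbounded.
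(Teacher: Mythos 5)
Your proof is correct, and it rests on the same three ingredients as the paper's: L\'evy's conformal invariance, the Carath\'eodory extension of $f$ to $\partial W$, and the Schwarz-reflection identity $f(\bar z)=\overline{f(z)}$ deduced from the identity theorem. Where you diverge is the endgame. You condition on $\{Im(B_\tau)>0\}$ to identify the law of $\pi(X)$, which then forces you to establish a small extra lemma --- that $f$ either preserves or swaps the two components of $W\setminus\RR$ (note this does require a word of justification: if $z\in W\setminus\RR$ had $f(z)\in\RR$ then $f(z)=\overline{f(z)}=f(\bar z)$ would violate injectivity) --- and to invoke the reflection symmetry of Brownian motion a second time for the process started at $f(a)$. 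The paper sidesteps all of this: since $f(\bar z)=\overline{f(z)}$ gives the pointwise identity $Re(f(z))=Re(f(\bar z))$, one has the sure equality $Re(f(B_\tau))=Re\bigl(f(Re(B_\tau)+i\,|Im(B_\tau)|)\bigr)=Re(f(\pi(Re(B_\tau))))$, and the conclusion then drops out immediately from $Re(B_\tau)\sim X$, with no conditioning and no need to know which boundary arc $f$ sends where. Both routes are valid; the paper's is shorter because it exploits that $Re\circ f$ is already a conjugation-invariant function, rather than splitting the sample space by the sign of $Im(B_\tau)$.
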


\begin{proof}
The conditions on the domain imply that $W$ is conformally equivalent to a Jordan domain by a M\"obius transformation. By Carath\'eodory's theorem (see \cite{goluzin}), $f$ extends to a continuous bijection from the closure of $W$ (in the Riemann sphere) to itself, thus $f$ is defined on $\partial W$. The fact that $f(W \cap \RR) \subseteq \RR$ implies that the analytic functions $f(z)$ and $\seg{15}{f(\bar z)}$ agree on $\RR$, and therefore by the uniqueness principle for analytic functions (see \cite{rud}) we have $f(z) = \seg{15}{f(\bar z)}$ and also $f(\bar z)=\seg{15}{f(z)}$, for all $z \in (W \cup \partial W)$. Now for $\tau$ as in \eqref{tau}, 
$$X \sim \DD_a \sim Re(B_\tau),$$ and by L\'evy's theorem on the conformal invariance of Brownian motion, we have $$\DD_{f(a)} \sim Re(f(B_\tau)) = Re(f(Re(B_\tau) + i Im(B_\tau))).$$ However, $f(\bar z)=\seg{15}{f(z)}$ implies $$Re(f(Re(B_\tau) + i Im(B_\tau))) = Re(f(Re(B_\tau) + i |Im(B_\tau)|)) = Re(f(\pi(Re(B_\tau)))).$$ The result follows upon replacing $Re(B_\tau)$ with $X$.
\end{proof}

Naturally, the difficulties in applying this result are $(a)$ identifying the distribution of $Re(B_\tau)$, and $(b)$ finding a conformal automorphism of the required type. We can, however, give an example as follows.
%Here we find a measurable function $h:\R \to \R$ under which the pushforward of the  distribution $\frac{1}{2} \sech (\frac{\pi}{2}x)\, dx$, which is known as the $\sech$ distribution, is again the $\sech$ distribution.

Let $W = \{-\frac{\pi}{2}<Im(z)< \frac{\pi}{2}\}$. The function $\psi(z) = ie^{z}$ maps $W$ conformally onto the upper half-plane, so using the function $\varphi(z) = \frac{1}{2}\left(z - \frac{1}{z}\right)$ from Section \ref{sec:newton} we see that $f=\psi^{-1} \circ \varphi \circ \psi$ is a conformal self-map of $W$ of the type required for Proposition \ref{zoe}. Calculating, we have

$$
f(z) = \mbox{Log}\(- \frac{i}{2} (ie^z - \frac{1}{ie^z})\) = \mbox{Log} \(\frac{e^z + e^{-z}}{2}\),
$$
where $\mbox{Log}$ denotes the principal branch of the logarithm. If $g(x) = Re(f(\pi(x)))$ then

$$
g(x) = \mbox{Log} \lb \frac{e^{x+i\pi/2} + e^{-x-i\pi/2}}{2}\rb = \mbox{Log} \lb\frac{e^x - e^{-x}}{2}\rb = \mbox{Log} |\sinh x|.
$$

Using the exit distribution of $(B_t)_t$ from $W$ derived in Proposition \ref{thm:bm_exit_sech}, we obtain the following result.

\begin{corollary}[An invariant map for the $\sech$ law] The distribution $\frac{1}{2} \sech (\frac{\pi}{2}x)\, dx$ is invariant under the transformation
	\begin{align*}
		g(x) := \frac{2}{\pi} \mbox{Log} \lb \sinh \(\frac{\pi}{2}x\)\rb.%, \qquad  x\in\R \setminus \set{0}.
	\end{align*}
\end{corollary}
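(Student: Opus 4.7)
The plan is to invoke Proposition \ref{zoe} with the horizontal strip $W = \{-\pi/2 < \text{Im}(z) < \pi/2\}$, the conformal self-map $f = \psi^{-1} \circ \varphi \circ \psi$ already built in the text, and the starting point $a=0$. First I would check the hypotheses: $W$ is simply connected and symmetric about $\R$, every vertical line meets $\partial W$ in the conjugate pair $x \pm i\pi/2$ so $\pi(x) = x + i\pi/2$, and because $\psi(0)=i$ and $\varphi(i)=i$ we have $f(0)=0$. The explicit formula $f(z) = \mbox{Log}\bigl(\frac{e^z+e^{-z}}{2}\bigr)$ already displayed shows that $f(\R) \subseteq \R$, completing the hypotheses of Proposition \ref{zoe}.

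Next I would identify $\DD_0$, the distribution of $\text{Re}(B_\tau)$ for Brownian motion started at $0$ and exiting $W$. The efficient route is through $\psi$: by L\'evy's theorem $\psi(B_t)$ is a time-changed Brownian motion started at $\psi(0)=i$ in the upper half-plane, so by Proposition \ref{thm:bm_exit_cauchy} its exit point $\psi(B_\tau)$ is standard Cauchy. Writing $B_\tau = X \pm i\pi/2$, a direct calculation gives $\psi(B_\tau) = \mp e^X$, so $X = \ln|\psi(B_\tau)| = \ln|C|$ with $C$ standard Cauchy; a one-line change of variables then shows that $\DD_0$ has density $\tfrac{1}{\pi}\sech(x)$.

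At this point Proposition \ref{zoe}, combined with the identity $g_0(x) \defeq \text{Re}(f(\pi(x))) = \mbox{Log}|\sinh x|$ computed in the excerpt, yields that the density $\tfrac{1}{\pi}\sech(x)\,dx$ is invariant under $g_0$. The corollary then follows from a cosmetic linear rescaling: if $X$ has density $\tfrac{1}{\pi}\sech(x)$ and $Y \defeq \tfrac{2}{\pi}X$, then $Y$ has density $\tfrac{1}{2}\sech(\tfrac{\pi}{2}y)$, and the invariance $g_0(X) \ed X$ transcribes into $\tfrac{2}{\pi}\mbox{Log}\bigl|\sinh(\tfrac{\pi}{2}Y)\bigr| \ed Y$, which is precisely the stated claim.

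I do not anticipate a serious obstacle; every ingredient has been developed earlier in the paper. The one step that deserves mild care is the identification of $\DD_0$, where routing through $\psi$ and Proposition \ref{thm:bm_exit_cauchy} is considerably cleaner than rescaling Proposition \ref{thm:bm_exit_sech} from a width-$2$ vertical strip to the width-$\pi$ horizontal strip $W$, since the latter approach requires keeping track of both a rotation and a dilation of Brownian motion.
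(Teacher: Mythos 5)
Your proposal is correct and follows the paper's approach: both invoke Proposition \ref{zoe} on the strip $W = \{-\pi/2 < \mathrm{Im}(z) < \pi/2\}$ with the automorphism $f = \psi^{-1}\circ\varphi\circ\psi$ and the formula $\mathrm{Re}(f(\pi(x))) = \mathrm{Log}|\sinh x|$, followed by the linear rescaling $x \mapsto \tfrac{2}{\pi}x$. The only (minor) divergence is in identifying $\DD_0$: the paper cites Proposition \ref{thm:bm_exit_sech} directly (leaving the rotation and dilation of the strip implicit), whereas you push $B_\tau$ forward through $\psi$ to reduce to the Cauchy exit law of Proposition \ref{thm:bm_exit_cauchy} and the computation $X = \ln|C|$ — a legitimate and slightly more self-contained route to the same density $\tfrac{1}{\pi}\sech(x)$.
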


\section*{Acknowledgements}   W. Chin and P. Jung are supported in part by (South Korean) National Research Foundation grant N01170220. This project began while G. Markowsky was visiting KAIST, and he would like to thank the mathematics department there for their kind hospitality. We also thank Davar Khoshnevisan and Edson de Faria for helpful conversations.

\bibliographystyle{alpha}
\bibliography{references}
\end{document}